\def\F{{\mathbb F}}
\DeclareMathOperator{\Tr}{Tr}
\DeclareMathOperator{\ex}{ex}
\newtheorem{theorem}{Theorem}[section]
\newtheorem{corollary}[theorem]{Corollary}
\newtheorem{lemma}[theorem]{Lemma}
\begin{document}

\title{Combinatorial Nullstellensatz and Tur\'an numbers of complete $r$-partite $r$-uniform hypergraphs}
\author{Alexey Gordeev}
\date{}

\maketitle

\begin{abstract}
In this note we describe how Laso{\'n}'s generalization of Alon's Combinatorial Nullstellensatz gives a framework for constructing lower bounds on the Tur\'an number $\ex(n, K^{(r)}_{s_1,\dots,s_r})$ of the complete $r$-partite $r$-uniform hypergraph $K^{(r)}_{s_1,\dots,s_r}$.
To illustrate the potential of this method, we give a short and simple explicit construction for the Erd{\H o}s box problem, showing that $\ex(n, K^{(r)}_{2,\dots,2}) = \Omega(n^{r - 1/r})$, which asymptotically matches best known bounds when $r \leq 4$.
\end{abstract}

\section{Introduction}

\subsection{Tur\'an numbers of complete \texorpdfstring{$r$}{r}-partite \texorpdfstring{$r$}{r}-uniform hypergraphs}

A \textit{hypergraph} $H = (V, E)$ consists of a set of \textit{vertices} $V$ and a set of \textit{edges} $E$, each edge being some subset of $V$.
A hypergraph is \textit{$r$-uniform} if each edge in it contains exactly $r$ vertices.
An $r$-uniform hypergraph is \textit{$r$-partite} if its set of vertices can be represented as a disjoint union of $r$ parts with every edge containing one vertex from each     part.
The \textit{complete} $r$-partite $r$-uniform hypergraph with parts of sizes $s_1, \dots, s_r$ contains all $s_1 \cdots s_r$ possible edges and is denoted by $K^{(r)}_{s_1, \dots, s_r}$.

Let $H$ be an $r$-uniform hypergraph.
The \textit{Tur\'an number} $\ex(n, H)$ is the maximum number of edges in an $r$-uniform hypergraph on $n$ vertices containing no copies of $H$.
A classical result of Erd{\H o}s~\cite{erdosExtremalProblemsGraphs1964} implies that for $s_1 \leq \dots \leq s_r$,

\begin{equation}\label{eq:Erdos}
\ex(n, K^{(r)}_{s_1, \dots, s_r}) = O\left( n^{r - \frac{1}{s_1 \cdots s_{r - 1}}} \right).
\end{equation}

In~\cite{mubayiExactResultsNew2002}, Mubayi conjectured that bound~\eqref{eq:Erdos} is asymptotically tight.
Recently, Pohoata and Zakharov~\cite{pohoataNormHypergraphs2021} showed that this is true whenever $s_1, \dots, s_r \geq 2$ and $s_r \geq ((r - 1)(s_1 \cdots s_{r - 1} - 1))! + 1$, extending earlier results of Alon, Koll{\'a}r, R{\'o}nyai and Szab{\'o}~\cite{kollarNormgraphsBipartiteTuran1996,alonNormGraphsVariationsApplications1999} and Ma, Yuan and Zhang~\cite{maExtremalResultsComplete2018}.

Nevertheless, the conjecture remains open even in a special case $\ex(n, K^{(r)}_{2,\dots, 2})$, which is often referred to as \textit{the Erd{\H o}s box problem}.
The best known lower bound is due to Conlon, Pohoata and Zakharov~\cite{conlonRandomMultilinearMaps2021}, who showed that for any $r \geq 2$,

\begin{equation}\label{eq:CPZ}
\ex(n, K^{(r)}_{2,\dots, 2}) = \Omega\left( n^{r - \lceil \frac{2^r - 1}{r} \rceil^{-1}} \right).    
\end{equation}

\subsection{Generalized Combinatorial Nullstellensatz}

Let $\F$ be an arbitrary field, and let $f \in \F[x_1,\dots,x_r]$ be a polynomial in $r$ variables.
A monomial $x_1^{d_1} \cdots x_r^{d_r}$ is a \textit{monomial of a polynomial $f$} if the coefficient of $x_1^{d_1} \cdots x_r^{d_r}$ in $f$ is non-zero.
Recall the famous Combinatorial Nullstellensatz by Alon (see Theorem 1.2 in~\cite{alonCombinatorialNullstellensatz1999}).

\begin{theorem}[Alon, 1999]\label{thm:CN}
Let $x_1^{d_1} \cdots x_r^{d_r}$ be a monomial of $f$, and let $\deg f \leq d_1 + \dots + d_r$.
Then for any subsets $A_1, \dots, A_r$ of $\F$ with sizes $|A_i| \geq d_i + 1$, $f$ does not vanish on $A_1\times\dots\times A_r$, i.e. $f(a_1,\dots,a_r) \neq 0$ for some $a_i \in A_i$.
\end{theorem}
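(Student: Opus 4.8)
The plan is to argue by contradiction: assume that $f$ vanishes on all of $A_1 \times \cdots \times A_r$, and derive a contradiction with the assumption that $x_1^{d_1}\cdots x_r^{d_r}$ is a monomial of $f$ while $\deg f \le d_1 + \cdots + d_r$.

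First I would isolate the following auxiliary fact (Alon's Lemma~2.1 in~\cite{alonCombinatorialNullstellensatz1999}): if $P \in \F[x_1,\dots,x_r]$ satisfies $\deg_{x_i} P \le |A_i| - 1$ for every $i$ and $P$ vanishes on $A_1 \times \cdots \times A_r$, then $P$ is the zero polynomial. This is proved by induction on $r$: for $r = 1$ it is the standard fact that a nonzero univariate polynomial of degree at most $|A_1| - 1$ cannot vanish at $|A_1|$ distinct points; for the inductive step one writes $P = \sum_{j \ge 0} P_j(x_1,\dots,x_{r-1})\,x_r^j$, fixes $(a_1,\dots,a_{r-1}) \in A_1 \times \cdots \times A_{r-1}$, applies the univariate case in $x_r$ to conclude $P_j(a_1,\dots,a_{r-1}) = 0$ for all $j$, and then invokes the inductive hypothesis on each $P_j$.

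The substantive step is to reduce $f$ into the range of this lemma without disturbing the coefficient of $x_1^{d_1}\cdots x_r^{d_r}$, and this is the step I expect to be the main obstacle to write cleanly. For each $i$ set $g_i(x_i) = \prod_{a \in A_i}(x_i - a)$, a monic polynomial of degree $|A_i|$; on $A_i$ one has $x_i^{|A_i|} = x_i^{|A_i|} - g_i(x_i)$ because $g_i$ vanishes there, and the right-hand side has degree strictly less than $|A_i|$. Repeatedly substituting in this way inside every monomial of $f$ whose $x_i$-exponent reaches $|A_i|$ for some $i$ produces a polynomial $\bar f$ with $\deg_{x_i} \bar f \le |A_i| - 1$ for all $i$ that still agrees with $f$ on $A_1 \times \cdots \times A_r$; hence $\bar f$ vanishes there and, by the lemma, $\bar f \equiv 0$. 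It then remains to check that the coefficient of $x_1^{d_1}\cdots x_r^{d_r}$ in $\bar f$ equals that in $f$: each elementary substitution strictly decreases the total degree of the monomial it acts on, so the reduced form of any monomial of $f$ of total degree below $d_1 + \cdots + d_r$ still has total degree below $d_1 + \cdots + d_r$ and contributes nothing, while a monomial of total degree exactly $d_1 + \cdots + d_r$ and different from $x_1^{d_1}\cdots x_r^{d_r}$ is either already reduced (so it is left unchanged and is not $x_1^{d_1}\cdots x_r^{d_r}$) or not reduced (so its reduced form has strictly smaller total degree), and in either case contributes nothing. Since $d_i \le |A_i| - 1$, the monomial $x_1^{d_1}\cdots x_r^{d_r}$ is itself already reduced and therefore contributes its original, nonzero coefficient to $\bar f$ — contradicting $\bar f \equiv 0$, and completing the proof.
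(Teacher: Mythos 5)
Your argument is correct and complete. Note, however, that the paper does not prove this statement at all: Theorem 1 is quoted from Alon's paper with a citation, so there is no in-paper proof to compare against. What you wrote is the standard argument from the literature — the vanishing lemma for polynomials with $\deg_{x_i} \le |A_i|-1$ (Alon's Lemma 2.1) combined with reduction of $f$ modulo the grid polynomials $g_i(x_i)=\prod_{a\in A_i}(x_i-a)$ — presented in the direct form (as in Micha{\l}ek's short proof) rather than via Alon's ideal-membership Theorem 1.1; the coefficient-tracking step, where you use $\deg f \le d_1+\cdots+d_r$ and the strict degree drop of each elementary substitution to see that the coefficient of $x_1^{d_1}\cdots x_r^{d_r}$ survives the reduction, is exactly the point where the degree hypothesis enters, and you handle it correctly.
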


A monomial $x_1^{d_1} \cdots x_r^{d_r}$ of $f$ is \textit{maximal} if it does not divide any other monomial of $f$.
Laso{\'n} showed the following generalization of Combinatorial Nullstellensatz (see Theorem 2 in \cite{lasonGeneralizationCombinatorialNullstellensatz2010}).
It should be mentioned that an even stronger theorem was proved by Schauz in 2008 (see Theorem 3.2(ii) in~\cite{schauzAlgebraicallySolvableProblems2008}).

\begin{theorem}[Laso{\'n}, 2010]\label{thm:Lason}
Let $x_1^{d_1} \cdots x_r^{d_r}$ be a maximal monomial of $f$.
Then for any subsets $A_1, \dots, A_r$ of $\F$ with sizes $|A_i| \geq d_i + 1$, $f$ does not vanish on $A_1\times\dots\times A_r$, i.e. $f(a_1,\dots,a_r) \neq 0$ for some $a_i \in A_i$.
\end{theorem}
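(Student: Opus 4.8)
The plan is to pass to a quotient ring and follow the fate of a single coefficient under polynomial reduction. First I would reduce to the case $|A_i| = d_i + 1$ for every $i$: shrinking each $A_i$ to an arbitrary $(d_i+1)$-element subset leaves $f$ — and hence the maximality of the monomial $\x^{\d} := x_1^{d_1}\cdots x_r^{d_r}$ — untouched, and a point of $A_1\times\cdots\times A_r$ on which $f$ is nonzero is in particular such a point for the original sets. So assume $|A_i| = d_i+1$, write $A = A_1\times\cdots\times A_r$, and suppose towards a contradiction that $f$ vanishes identically on $A$. Set $g_i(x_i) = \prod_{a\in A_i}(x_i-a) = x_i^{d_i+1} - h_i(x_i)$ with $\deg h_i \le d_i$. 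Repeatedly applying the rewriting rule $x_i^{d_i+1}\mapsto h_i(x_i)$, replace $f$ by a polynomial $\tilde f$ with $\deg_{x_i}\tilde f \le d_i$ for all $i$ and $\tilde f \equiv f \pmod{(g_1,\dots,g_r)}$. (This terminates and $\tilde f$ is well defined: the $g_i$ have pairwise coprime leading monomials $x_i^{d_i+1}$, so $\{g_1,\dots,g_r\}$ is a Gr\"obner basis and $\tilde f$ is the associated normal form; alternatively one argues directly by a well-ordering of monomials.) Since each $g_i$ vanishes on $A_i$, hence on $A$, the polynomials $\tilde f$ and $f$ agree on $A$, so $\tilde f$ also vanishes on $A$. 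A nonzero polynomial with $\deg_{x_i}\le d_i = |A_i|-1$ for every $i$ cannot vanish on all of $A$ — this follows by induction on $r$ from the fact that a nonzero univariate polynomial of degree $\le d_i$ has at most $d_i$ roots — hence $\tilde f = 0$. It therefore suffices to prove $\tilde f \neq 0$.

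For this I would track the coefficient of $\x^{\d}$ through the reduction. Write monomials as $\x^{\b} = x_1^{b_1}\cdots x_r^{b_r}$ and say $\x^{\a}$ \emph{divides} $\x^{\b}$ if $a_i\le b_i$ for all $i$, \emph{strictly} if moreover $\a\neq\b$. A single reduction step takes a monomial $\x^{\b}$ with some $b_i\ge d_i+1$ and replaces $c\,\x^{\b}$ by $c\,\x^{\b-(d_i+1)e_i}\,h_i(x_i)$, where $e_i$ is the $i$-th unit vector; every monomial arising this way agrees with $\x^{\b}$ in all coordinates $j\neq i$ and has $i$-th coordinate at most $b_i-1$, so it strictly divides $\x^{\b}$. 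Since cancellation can only delete monomials, an induction over the steps shows that every monomial occurring at any stage of the reduction divides some monomial of the original $f$. Now $\x^{\d}$ is itself never a reduction target, since each of its exponents equals $d_i < d_i+1$; so its coefficient can change only when a step produces $\x^{\d}$ out of some monomial $\x^{\b}$ present at that moment. But then $\x^{\d}$ strictly divides $\x^{\b}$, and $\x^{\b}$ divides some monomial $\x^{\b'}$ of $f$, whence $\x^{\d}$ strictly divides the monomial $\x^{\b'}$ of $f$ — contradicting the maximality of $\x^{\d}$. Consequently the coefficient of $\x^{\d}$ in $\tilde f$ equals its coefficient in $f$, which is nonzero by hypothesis, so $\tilde f\neq 0$. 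This contradiction finishes the proof.

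The parts I expect to be routine are the termination and well-definedness of the reduction and the ``small individual degrees $\Rightarrow$ does not vanish on the grid'' lemma, both standard. The only real content, and the step to be careful with, is the claim that every monomial ever produced during reduction divides a monomial of $f$: one must check that reducing modulo $x_i^{d_i+1}-h_i(x_i)$ replaces a monomial only by monomials it strictly dominates in the divisibility order, and that intermediate cancellations cannot manufacture a monomial violating this. Granting that, a maximal monomial whose exponents already lie below the thresholds $d_i+1$ is simply frozen by the reduction, which is exactly what is needed.
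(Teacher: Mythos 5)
Your proof is correct. Note that the paper itself does not prove this statement at all: it is quoted as Theorem~2 of Laso\'n's 2010 note (with Schauz's stronger 2008 result also cited), so there is no in-paper argument to compare against; what you have written is essentially the known proof of the generalization. The structure is sound: after shrinking to $|A_i|=d_i+1$, reducing modulo $g_i(x_i)=\prod_{a\in A_i}(x_i-a)$ yields $\tilde f$ agreeing with $f$ on the grid and with $\deg_{x_i}\tilde f\le d_i$, and the standard lemma (a nonzero polynomial with individual degrees at most $|A_i|-1$ cannot vanish on $A_1\times\dots\times A_r$) forces $\tilde f=0$ under the contradiction hypothesis. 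Your key invariant --- every monomial appearing at any stage of the reduction divides some monomial of the original $f$, while each rewriting step only creates monomials strictly below the rewritten one in the componentwise divisibility order --- is argued correctly, and together with the fact that $x_1^{d_1}\cdots x_r^{d_r}$ is itself never a reduction target it does show that the coefficient of the maximal monomial is frozen, giving $\tilde f\neq 0$. Two minor comments: uniqueness of the normal form (the Gr\"obner-basis aside) is not actually needed, since any terminating reduction produces a representative with the required properties and your invariant applies to it; and termination is immediate anyway because each step replaces a monomial by finitely many monomials of strictly smaller total degree, so the multiset of total degrees decreases in a well-founded order.
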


Notably, in most applications of Combinatorial Nullstellensatz the condition $\deg f \leq d_1 + \dots + d_r$ from Theorem~\ref{thm:CN} turns out to be sufficient and thus the more general Theorem~\ref{thm:Lason} is not needed.
Below we give a rare example of an application in which the full power of Theorem~\ref{thm:Lason} is essential.

\section{The framework}\label{sec:framework}

For subsets $B_1, \dots, B_r$ of a field $\F$ denote the set of zeros of a polynomial $f \in \F[x_1, \dots, x_r]$ on $B_1 \times \dots \times B_r$ as
\[
Z(f; B_1,\dots, B_r) := \{ (a_1, \dots, a_r) \in B_1 \times \dots \times B_r \ |\ f(a_1, \dots, a_r) = 0 \}.
\]
In the case $B_1 = \dots = B_r = B$ we will write $Z(f; B, r)$ instead of $Z(f; B_1, \dots, B_r)$.

The set $Z(f; B_1,\dots, B_r)$ can be viewed as the set of edges of an $r$-partite $r$-uniform hypergraph $H(f; B_1, \dots, B_r)$ with parts $B_1, \dots, B_r$.
Our key observation is the following lemma which immediately follows from Theorem \ref{thm:Lason}.

\begin{lemma}\label{lm:CN-Turan}
Let $x_1^{d_1} \cdots x_r^{d_r}$ be a maximal monomial of $f$.
Then for any subsets $B_1, \dots, B_r$ of $\F$ the hypergraph $H(f; B_1,\dots, B_r)$ is free of copies of $K^{(r)}_{d_1 + 1, \dots, d_r + 1}$.
\end{lemma}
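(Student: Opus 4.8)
The plan is to prove the contrapositive-flavored statement directly: suppose for contradiction that $H(f; B_1, \dots, B_r)$ contains a copy of $K^{(r)}_{d_1+1,\dots,d_r+1}$. By definition of this complete $r$-partite hypergraph, there exist subsets $A_i \subseteq B_i$ with $|A_i| = d_i + 1$ for each $i$, such that \emph{every} tuple $(a_1,\dots,a_r) \in A_1 \times \dots \times A_r$ is an edge of $H(f; B_1,\dots,B_r)$ — that is, $f(a_1,\dots,a_r) = 0$ for all such tuples. In other words, $f$ vanishes identically on $A_1 \times \dots \times A_r$.

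Now I would invoke Laso\'n's theorem (Theorem~\ref{thm:Lason}) with exactly these sets $A_1,\dots,A_r$. Since $x_1^{d_1}\cdots x_r^{d_r}$ is a maximal monomial of $f$ by hypothesis, and $|A_i| = d_i + 1 \geq d_i + 1$, the theorem guarantees that $f$ does \emph{not} vanish on $A_1 \times \dots \times A_r$ — there is some point where $f$ is nonzero. This contradicts the previous paragraph, completing the proof.

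The only subtlety worth spelling out is the translation between the combinatorial language of hypergraph containment and the algebraic language of vanishing sets: a copy of $K^{(r)}_{d_1+1,\dots,d_r+1}$ inside the $r$-partite hypergraph $H(f;B_1,\dots,B_r)$ (whose part $i$ is $B_i$) must respect the $r$-partition, so its part-$i$ vertex class is a $(d_i+1)$-subset $A_i$ of $B_i$, and "complete" means all $\prod_i(d_i+1)$ cross-tuples are present as edges, i.e. lie in $Z(f;B_1,\dots,B_r)$. Once this identification is made, there is essentially no obstacle — the lemma is an immediate restatement of Theorem~\ref{thm:Lason}, which is why the excerpt says it "immediately follows." I do not anticipate any hard step; the work is entirely in setting up the correspondence cleanly.
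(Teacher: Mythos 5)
Your argument is correct and is precisely the deduction the paper intends (the paper states the lemma "immediately follows" from Theorem~\ref{thm:Lason} and gives no further proof): a copy yields sets $A_i \subseteq B_i$ with $|A_i| = d_i + 1$ on which $f$ vanishes identically, contradicting Laso\'n's theorem. One small caveat: your claim that the part-$i$ class of the copy is a $(d_i+1)$-subset of $B_i$ silently fixes the partition-respecting (labelled) reading of "copy" — if the $d_i$ are not all equal, an unlabelled copy could occupy the parts in permuted order, where Theorem~\ref{thm:Lason} no longer applies directly — but in the paper's application ($d_1 = \dots = d_r = 1$) this distinction is vacuous.
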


This lemma gives us a new tool for constructing lower bounds on $\ex(n, K^{(r)}_{s_1, \dots, s_r})$.
In Section \ref{sec:Turan} we give a simple example of such construction for $\ex(n, K^{(r)}_{2, \dots, 2})$ which asymptotically matches~\eqref{eq:CPZ} when $r \leq 4$.

Combining Lemma~\ref{lm:CN-Turan} with~\eqref{eq:Erdos}, we also get the following Schwartz--Zippel type corollary, which may be of independent interest.

\begin{corollary}
Let $x_1^{d_1} \cdots x_r^{d_r}$ be a maximal monomial of $f$, where $d_1 \leq \dots \leq d_r$.
Then for any subsets $B_1, \dots, B_r$ of $\F$ with sizes $|B_i| = n$,
\[
| Z(f; B_1, \dots, B_r) | = O\left( n^{r - \frac{1}{(d_1 + 1) \cdots (d_{r - 1} + 1)}} \right).
\]
\end{corollary}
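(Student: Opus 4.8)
The plan is to read $Z(f; B_1,\dots,B_r)$ off as the edge set of an $r$-uniform hypergraph and then quote the classical upper bound~\eqref{eq:Erdos} directly, the point being that the non-containment supplied by Lemma~\ref{lm:CN-Turan} is exactly the hypothesis~\eqref{eq:Erdos} needs.

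First I would observe that, after relabelling so that $B_1,\dots,B_r$ are pairwise disjoint (this changes neither the number of zeros counted nor the presence of any complete $r$-partite subhypergraph), $H(f; B_1,\dots,B_r)$ is an $r$-uniform hypergraph on exactly $rn$ vertices whose edge set is $Z(f; B_1,\dots,B_r)$. Next, since $d_1 \le \dots \le d_r$, the sizes $s_i := d_i + 1$ satisfy $s_1 \le \dots \le s_r$, so~\eqref{eq:Erdos} is applicable to $K^{(r)}_{s_1,\dots,s_r} = K^{(r)}_{d_1+1,\dots,d_r+1}$.

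By Lemma~\ref{lm:CN-Turan} the hypergraph $H(f; B_1,\dots,B_r)$ contains no copy of $K^{(r)}_{d_1+1,\dots,d_r+1}$, hence
\[
|Z(f; B_1,\dots,B_r)| \le \ex\!\left( rn,\ K^{(r)}_{d_1+1,\dots,d_r+1} \right) = O\!\left( (rn)^{\,r - \frac{1}{(d_1+1)\cdots(d_{r-1}+1)}} \right),
\]
where the last equality is~\eqref{eq:Erdos}. Since $r$ is fixed, the constant factor $r^{\,r - 1/((d_1+1)\cdots(d_{r-1}+1))}$ is absorbed into the implied constant, which gives the claimed bound.

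I do not expect a genuine obstacle here; the argument is essentially bookkeeping. The only two points to watch are that the natural host hypergraph has $rn$ rather than $n$ vertices — harmless because $r$ is a constant — and that~\eqref{eq:Erdos} is stated for non-decreasing part sizes, which is precisely why the hypothesis $d_1 \le \dots \le d_r$ is used. If a self-contained proof were preferred, one could instead run the standard double-counting argument behind~\eqref{eq:Erdos} directly on the $r$-partite host, but invoking~\eqref{eq:Erdos} is the cleanest route.
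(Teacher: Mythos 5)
Your argument is correct and is exactly the proof the paper intends: the corollary is stated as "combining Lemma~\ref{lm:CN-Turan} with~\eqref{eq:Erdos}", i.e.\ view $Z(f;B_1,\dots,B_r)$ as the edge set of the $K^{(r)}_{d_1+1,\dots,d_r+1}$-free $r$-partite hypergraph $H(f;B_1,\dots,B_r)$ on $rn$ vertices and apply the Erd\H{o}s bound, absorbing the constant $r$. Your extra care about disjointifying the parts and about the ordering $d_1\leq\dots\leq d_r$ matching the hypothesis of~\eqref{eq:Erdos} is exactly the right bookkeeping.
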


The described framework was also recently discussed in an article by Rote (see Section 8 in~\cite{roteGeneralizedCombinatorialLasonAlonZippelSchwartz2023}).

\section{Construction}\label{sec:Turan}

Here $\F_{p^r}$ is the finite field of size $p^r$ and $\F_{p^r}^* = \F_{p^r} \setminus \{0\}$.

\begin{lemma}\label{lm:construction}
Let $p$ be a prime number, and let $f \in \F_{p^r}[x_1, \dots, x_r]$ be the following polynomial:
\[
f(x_1, \dots, x_r) = x_1 \cdots x_r + \sum_{i = 1}^r \prod_{j = 1}^{r - 1} x_{i + j}^{p^r - p^j},
\]
where indices are interpreted modulo $n$, i.e. $x_{r + 1} = x_1$, $x_{r + 2} = x_2$, etc.
Then 
\[
|Z(f; \F_{p^r}^*, r)| = p^{r - 1} ( p^r - 1 )^{r - 1}.
\]
\end{lemma}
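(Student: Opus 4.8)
The plan is to restrict $f$ to the grid $(\F_{p^r}^*)^r$ — where every coordinate is invertible and satisfies $x^{p^r}=x$ — to simplify the resulting expression, and then to recognize it as a field trace. For $x\in\F_{p^r}^*$ one has $x^{p^r-p^j}=x^{(p^r-1)-(p^j-1)}=x^{1-p^j}$, and in the $i$-th summand of $f$ the factors run over $x_{i+1},\dots,x_{i+r-1}$, i.e.\ over all variables except $x_i$ (indices read cyclically, $x_{r+k}=x_k$). Hence that summand equals $\prod_{j=1}^{r-1}x_{i+j}^{1-p^j}=\bigl(x_i^{-1}\prod_{k=1}^r x_k\bigr)\prod_{j=1}^{r-1}x_{i+j}^{-p^j}=(x_1\cdots x_r)\,D_i^{-1}$, where $D_i:=\prod_{j=0}^{r-1}x_{i+j}^{p^j}$ (now including the $j=0$ factor $x_i$). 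So on $(\F_{p^r}^*)^r$ we have $f=(x_1\cdots x_r)\bigl(1+\sum_{i=1}^r D_i^{-1}\bigr)$, and since $x_1\cdots x_r\neq 0$, the point $(x_1,\dots,x_r)$ lies in $Z(f;\F_{p^r}^*,r)$ precisely when $\sum_{i=1}^r D_i^{-1}=-1$.

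The key step is that the Frobenius map permutes the $D_i$ cyclically: a direct computation gives $D_i^p=D_{i-1}$ for every $i$, the only delicate term being the wrap-around one, where $x_{i+r-1}^{p^r}=x_{i-1}$ because $x^{p^r}=x$ on $\F_{p^r}^*$. Setting $u:=D_1=x_1x_2^{p}\cdots x_r^{p^{r-1}}$ and iterating, $D_i=u^{p^{r-i+1}}$, so as a multiset $\{D_1,\dots,D_r\}=\{u^{p^0},u^{p^1},\dots,u^{p^{r-1}}\}$, and hence $\sum_{i=1}^r D_i^{-1}=\sum_{k=0}^{r-1}(u^{-1})^{p^k}=\Tr_{\F_{p^r}/\F_p}(u^{-1})$. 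Therefore $Z(f;\F_{p^r}^*,r)$ consists exactly of those tuples with $\Tr_{\F_{p^r}/\F_p}(u^{-1})=-1$.

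It remains to count these tuples. The map $(x_1,\dots,x_r)\mapsto u=x_1x_2^{p}\cdots x_r^{p^{r-1}}$ from $(\F_{p^r}^*)^r$ onto $\F_{p^r}^*$ has every fibre of size $(p^r-1)^{r-1}$, since $x_2,\dots,x_r$ can be chosen arbitrarily and then $x_1$ is determined; thus $|Z(f;\F_{p^r}^*,r)|=(p^r-1)^{r-1}\cdot\#\{u\in\F_{p^r}^*:\Tr_{\F_{p^r}/\F_p}(u^{-1})=-1\}$. As $v\mapsto v^{-1}$ is a bijection of $\F_{p^r}^*$ and $\Tr_{\F_{p^r}/\F_p}$ is a surjective $\F_p$-linear functional whose fibres each have $p^{r-1}$ elements, the last count equals $\#\{v\in\F_{p^r}^*:\Tr_{\F_{p^r}/\F_p}(v)=-1\}=p^{r-1}$ — the fibre over $-1\neq 0$ contains no $0$. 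Multiplying yields $|Z(f;\F_{p^r}^*,r)|=p^{r-1}(p^r-1)^{r-1}$, as claimed. The one genuinely non-obvious point is spotting the relation $D_i^p=D_{i-1}$ that collapses the sum into a trace; everything after that is routine bookkeeping.
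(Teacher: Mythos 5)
Your proposal is correct and follows essentially the same route as the paper: restrict to $(\F_{p^r}^*)^r$, rewrite $f$ as $x_1\cdots x_r\bigl(1+\Tr(u^{-1})\bigr)$ with $u=x_1x_2^p\cdots x_r^{p^{r-1}}$, and count using the fact that each nonzero trace value is attained by exactly $p^{r-1}$ elements. Your explicit verification of $D_i^p=D_{i-1}$ and the fibre-counting of the map $(x_1,\dots,x_r)\mapsto u$ are just more detailed phrasings of the steps the paper states directly.
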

\begin{proof}
Note that for any $a_1, \dots, a_r \in \F_{p^r}^*$ we have $a_i^{p^r} = a_i$, so
\[
f(a_1, \dots, a_r) = a_1 \cdots a_r \left( 1 + \sum_{i = 1}^r \prod_{j = 0}^{r - 1} a_{i + j}^{-p^j} \right)
= a_1 \cdots a_r \left( 1 + \Tr \left( a_1^{-1} a_2^{-p} \cdots a_r^{-p^{r - 1}} \right) \right),
\]
where $\Tr(a) = a + a^p + \dots + a^{p^{r - 1}}$ is the trace of the field extension $\F_{p^r} / \F_p$. 

Now let us fix $a_2, \dots, a_r \in \F_{p^r}^*$.
As $a_1$ runs over all values of $\F_{p^r}^*$, so does $a_1^{-1} a_2^{-p} \cdots a_r^{-p^{r - 1}}$.
There are exactly $p^{r - 1}$ elements $a \in \F_{p^r}^*$ for which $\Tr(a) = -1$, i.e. for any fixed $a_2, \dots, a_r$ there are exactly $p^{r - 1}$ values of $a_1$ for which $f(a_1, \dots, a_r) = 0$.
\end{proof}

\begin{theorem}\label{thm:K2222}
For any $r \geq 2$,
\[
\ex(n, K^{(r)}_{2, \dots, 2}) = \Omega\left( n^{r - \frac{1}{r}} \right).
\]
\end{theorem}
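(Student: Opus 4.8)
The plan is to combine Lemma~\ref{lm:construction} with Lemma~\ref{lm:CN-Turan} to get a $K^{(r)}_{2,\dots,2}$-free hypergraph that is dense enough. First I would check that the polynomial $f$ from Lemma~\ref{lm:construction} has $x_1 \cdots x_r$ as a \emph{maximal} monomial. Indeed, the other monomials of $f$ are of the form $\prod_{j=1}^{r-1} x_{i+j}^{p^r - p^j}$: each of these involves only $r-1$ of the variables (the variable $x_i$ is missing), and all the exponents $p^r - p^j$ are at least $p^r - p^{r-1} > 1$ once $p^r > 2$, which holds for every prime $p$ and every $r \geq 2$. So $x_1 \cdots x_r$ divides none of these monomials except itself, and by Theorem~\ref{thm:Lason} (via Lemma~\ref{lm:CN-Turan} with $d_1 = \dots = d_r = 1$) the hypergraph $H := H(f; \F_{p^r}^*, \dots, \F_{p^r}^*)$ is free of copies of $K^{(r)}_{2,\dots,2}$.

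Next I would count vertices and edges. The hypergraph $H$ is $r$-partite with each part equal to $\F_{p^r}^*$, so it has $N := r(p^r - 1)$ vertices, and by Lemma~\ref{lm:construction} it has $e(H) = p^{r-1}(p^r - 1)^{r-1}$ edges. Writing $q := p^r$, we have $N = r(q-1) = \Theta(q)$ and $e(H) = q^{(r-1)/r}(q-1)^{r-1} = \Theta\!\left( q^{(r-1)/r + (r-1)} \right) = \Theta\!\left( q^{r - 1/r} \right) = \Theta\!\left( N^{r - 1/r} \right)$. Thus for $n$ of the form $r(p^r - 1)$ with $p$ prime we obtain an $n$-vertex $K^{(r)}_{2,\dots,2}$-free $r$-uniform hypergraph with $\Omega(n^{r - 1/r})$ edges.

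Finally I would pass from this sparse set of admissible $n$ to all $n$. By Bertrand's postulate (or any effective prime-gap bound), for every sufficiently large $m$ there is a prime $p$ with $m/2 < p \leq m$; taking $m = \lceil (n/r)^{1/r} \rceil$ gives a prime $p$ with $p^r = \Theta(n/r) = \Theta(n)$, hence $n_0 := r(p^r - 1) \leq n$ and $n_0 = \Theta(n)$. The extremal construction on $n_0$ vertices, padded with $n - n_0$ isolated vertices, is a $K^{(r)}_{2,\dots,2}$-free hypergraph on $n$ vertices with $\Omega(n_0^{r-1/r}) = \Omega(n^{r-1/r})$ edges, which proves the theorem. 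The only mild subtlety is the monomial-maximality check in the first step; the prime-interpolation argument in the last step is standard, and one should just note in passing that the construction is trivially nonempty for small $n$ so the asymptotic statement is unaffected.
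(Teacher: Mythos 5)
Your proposal is correct and follows essentially the same route as the paper: verify that $x_1 \cdots x_r$ is a maximal monomial of the polynomial from Lemma~\ref{lm:construction}, invoke Lemma~\ref{lm:CN-Turan} to get a $K^{(r)}_{2,\dots,2}$-free hypergraph with $r(p^r-1)$ vertices and $p^{r-1}(p^r-1)^{r-1}$ edges, and conclude the bound. You merely spell out the maximality check and the standard prime-gap interpolation over all $n$, which the paper leaves implicit.
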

\begin{proof}
Note that $x_1 \cdots x_r$ is a maximal monomial of the polynomial $f$ from Lemma~\ref{lm:construction}.
Thus, due to Lemma~\ref{lm:CN-Turan}, a hypergraph $H_p = H(f; \F_{p^r}^*, r)$ with $r(p^r - 1)$ vertices and $p^{r - 1} ( p^r - 1 )^{r - 1}$ edges is free of copies of $K^{(r)}_{2, \dots, 2}$ for every prime $p$, which gives the desired bound.
\end{proof}

\section{Concluding remarks}

The construction from Section~\ref{sec:Turan} in the case $r = 3$ is structurally similar to the one given by Katz, Krop and Maggioni in~\cite{katzRemarksBoxProblem2002}.
Their construction can be generalized to higher dimensions giving an alternative proof of Theorem~\ref{thm:K2222} (private communication with Cosmin Pohoata; see also Proposition 11.2 in~\cite{yangPropertiesShortestLength2021}).
Our approach gives a simpler construction and a much shorter proof.

Motivated by the ideas discussed in Section~\ref{sec:framework}, Rote posed a problem (see Problem 1 in~\cite{roteGeneralizedCombinatorialLasonAlonZippelSchwartz2023}), equivalent to asking how large can the set $Z(f; B_1, B_2)$ be for a polynomial of the form $f(x, y) = xy + P(x) + Q(y)$ and sets $B_1$, $B_2$ of size $n$ each.
Lemma~\ref{lm:construction} answers this question asymptotically if sets $B_1$ and $B_2$ are allowed to be taken from the finite field $\F_{p^2}$.

\section*{Acknowledgements}

I would like to thank Danila Cherkashin and Fedor Petrov for helpful discussions, and G{\"u}nter Rote for useful comments on a draft of this note.

\bibliographystyle{abbrv}
\bibliography{main}

\end{document}